\newcommand{\thmlabel}[1]{\label{thm:#1}}
\newcommand{\thmref}[1]{Theorem~\ref{thm:#1}}
\newcommand{\lemlabel}[1]{\label{lem:#1}}
\newcommand{\lemref}[1]{Lemma~\ref{lem:#1}}
\newcommand{\proplabel}[1]{\label{prop:#1}}
\theoremstyle{plain}
\newtheorem{theorem}{Theorem}
\newtheorem{lemma}[theorem]{Lemma}
\newtheorem{proposition}[theorem]{Proposition}
\theoremstyle{definition}
\newtheorem{definition}[theorem]{Definition}
\newtheorem*{WLHC}{Weak List Hadwiger Conjecture}
\newtheorem*{LHC}{List Hadwiger Conjecture}
\newtheorem*{HC}{Hadwiger Conjecture}
\newtheorem*{WHC}{Weak Hadwiger Conjecture}
\newcommand{\plus}[2]{\ensuremath{{#1}^{{#2}+\alpha}}}
\newcommand{\M}{\ensuremath{\mathcal{M}}}
\renewcommand{\baselinestretch}{1.105}
\begin{document}

\title{Thomassen's Choosability Argument Revisited}

\author{David R. Wood}
\address{\newline Department of Mathematics and Statistics
\newline The University of Melbourne
\newline Melbourne, Australia}
\email{woodd@unimelb.edu.au}
\thanks{David Wood is a QEII Research Fellow supported by the
  Australian Research Council.}

\author{Svante Linusson}
\address{\newline Department of Mathematics
\newline KTH--Royal Institute of Technology
\newline Stockholm, Sweden}
\email{linusson@math.kth.se}
\thanks{Svante Linusson is a Royal Swedish Academy of Sciences
  Research Fellow supported by a grant from the Knut and Alice
  Wallenberg Foundation.}

\subjclass[2000]{05C83 graph minors, 05C15 graph coloring}

\date{\today}

\begin{abstract} 
Thomassen (1994) proved that every planar graph is $5$-choosable. This
result was generalised by {\v{S}}krekovski (1998) and He et
al.~(2008), who proved that every $K_5$-minor-free graph is
$5$-choosable. Both proofs rely on the characterisation of
$K_5$-minor-free graphs due to Wagner (1937). This paper proves the
same result without using Wagner's structure theorem or even planar
embeddings. Given that there is no structure theorem for graphs with
no $K_6$-minor, we argue that this proof suggests a possible approach
for attacking the Hadwiger Conjecture. 
\end{abstract}

\maketitle

\section{Introduction}
\label{Intro}

In 1943, \citet{Hadwiger43} made the following conjecture, which is
widely considered to be one of the most important open problems in
graph theory\footnote{All graphs in this paper are undirected, simple
  and finite.  We employ standard graph-theoretic terminology and
  notation \citep{Diestel05}.}; see \citep{Toft-HadwigerSurvey96} for
a survey.

\begin{HC} Every $K_t$-minor-free graph is
  $(t-1)$-colourable.  \end{HC}

The Hadwiger Conjecture is true for $t\leq6$ \citep{RSST97,RST-Comb93}
and unsolved for $t\geq7$. In general, $ct\sqrt{\log t}$ is the best
known upper bound on the chromatic number of $K_t$-minor-free graphs,
for some constant $c$ \citep{Kostochka82,Thomason84}. This result is
proved as follows. A graph $G$ is \emph{$d$-degenerate} if every
subgraph of $G$ has a vertex of degree at most $d$. Every
$d$-degenerate graph is $(d+1)$-colourable---choose a vertex $v$ of
degree at most $d$, apply induction to $G-v$, and colour $v$ with one
of the colours not present in its neighbourhood. \citet{Kostochka82}
and \citet{Thomason84} independently proved that every
$K_t$-minor-free graph is $ct\sqrt{\log t}$-degenerate, and is thus
$ct\sqrt{\log t}$-colourable. The following conjecture remains
unsolved.

\begin{WHC} There is a constant $c$ such that every $K_t$-minor-free
  graph is $ct$-colourable.  \end{WHC}

There are (at least) two major obstacles to overcome in a proof of the
Hadwiger Conjecture or the Weak Hadwiger Conjecture:
\begin{itemize}
\item There are $K_t$-minor-free graphs with minimum degree
  $ct\sqrt{\log t}$ for some constant $c$. Therefore the above
  degeneracy-based algorithm fails.
\item For $t\geq 6$, there is no known precise structural
  characterisation of $K_t$-minor-free graphs (and even for $t=6$ the
  situation seems hopeless).
\end{itemize}
This paper suggests a possible approach around these two obstacles.

A \emph{list-assignment} of a graph $G$ is a function $L$ that assigns
to each vertex $v$ of $G$ a set $L(v)$ of colours. $G$ is
\emph{$L$-colourable} if there is a colouring of $G$ such that the
colour assigned to each vertex $v$ is in $L(v)$. $G$ is
\emph{$k$-choosable} if $G$ is $L$-colourable for every
list-assignment $L$ with $|L(v)|\geq k$ for each vertex $v$ of $G$. If
$G$ is $k$-choosable then $G$ is also $k$-colourable---just use the
same set of $k$ colours for each vertex. Also note that every
$d$-degenerate graph is $(d+1)$-choosable. See
\citep{Woodall-ListColouringSurvey} for a survey on list colourings.
\citet{KawaMohar-JCTB07} made the following conjecture:

\begin{WLHC} There is a constant $c$ such that every $K_t$-minor-free
  graph is $ct$-choosable.  \end{WLHC}

\citet{KawaMohar-JCTB07} wrote that they believe the Weak List
Hadwiger Conjecture holds for
$c=\frac{3}{2}$. \citet{Wood-Contractibility} conjectured it with
$c=1$.

\begin{LHC} Every $K_t$-minor-free graph is $t$-choosable.  \end{LHC}

For $t\in\{2,3,4\}$, every $K_t$-minor-free graph is
$(t-2)$-degenerate, and thus is $(t-1)$-choosable. Now consider the
$t=5$ case. \citet{Thomassen-JCTB94} proved that every planar graph is
$5$-choosable, and \citet{Voigt-DM93} constructed planar graphs that
are not 4-choosable. Thomassen's result was generalised by
\citet{Skrekovski-DM98} and \citet{HMS-DM08} as follows:

\begin{theorem}[\citep{Skrekovski-DM98,HMS-DM08}] 
  \thmlabel{Choose5} Every $K_5$-minor-free graph is $5$-choosable.
\end{theorem}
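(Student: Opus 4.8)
The plan is to retrace Thomassen's induction and pin down exactly which combinatorial properties of ``a $2$-connected plane graph together with the cycle bounding its outer face'' it actually uses, and then to secure those properties for $K_5$-minor-free graphs directly, from minor-exclusion rather than from an embedding. As is standard, I would not attack \thmref{Choose5} in the stated form but prove a stronger statement by induction on $|V(G)|$: there is a distinguished ``boundary'' $C=v_1v_2\cdots v_p$ (playing the role of the outer cycle), the edge $v_1v_2$ is precoloured with two distinct colours, every other vertex of $C$ has a list of size at least $3$, every vertex off $C$ has a list of size at least $5$, and $(G,C)$ lies in a class $\G$ of pairs, defined by $K_5$-minor-freeness together with extra conditions on how $G$ sits around $C$, that is closed under the reductions below. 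Choosing $\G$ correctly is the whole problem; once that is done the colouring steps are essentially Thomassen's.

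Granting $\G$, the argument runs as follows. First reduce to $G$ being $2$-connected by splitting at a cut vertex (colour the piece containing $v_1v_2$ first, then extend to the remaining pieces one at a time, precolouring a fresh edge at each cut vertex), and to the case where the interior is nonempty (if $G=C$, colour $v_3,\dots,v_p$ greedily around the cycle: each of these lists has size at least $3$, and only $v_p$ ever has two already-coloured neighbours). For the inductive step there are two cases. If $C$ has a chord $v_iv_j$, then membership in $\G$ should guarantee that $\{v_i,v_j\}$ is a $2$-separation splitting $C$ into its two arcs; colour the side containing $v_1v_2$ first, note that $v_i$ and $v_j$ then receive distinct colours because $v_iv_j$ is an edge of that side, and colour the other side with $v_iv_j$ as its precoloured edge. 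If $C$ is chordless, let $v_p$ be the neighbour of $v_1$ on $C$ other than $v_2$, let $w_1,\dots,w_k$ be the neighbours of $v_p$ off $C$, choose two distinct colours $c',c''\in L(v_p)$ that avoid the colour of $v_1$, and in $G-v_p$ take the boundary obtained from $C$ by replacing $v_p$ with the path $v_1w_1\cdots w_kv_{p-1}$, after deleting $c',c''$ from each $L(w_i)$ (which keeps it of size at least $3$); colour $G-v_p$ by induction and extend to $v_p$, whose neighbours $v_1,w_1,\dots,w_k$ all avoid $\{c',c''\}$, so only $v_{p-1}$ can block one of $c',c''$ and the other remains available.

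The main obstacle is everything hidden behind ``membership in $\G$''. In the plane setting three facts come for free: the interior may be triangulated, so that in the chordless case $v_1w_1\cdots w_kv_{p-1}$ really is a path and the new boundary really is a cycle; a chord of the outer cycle is automatically a $2$-separation along the two arcs; and the reductions visibly preserve ``plane graph with this outer cycle''. None of these holds for an arbitrary cycle inside a $K_5$-minor-free graph, so $\G$ has to be a genuine definition, a minor-theoretic substitute for ``$G$ is planar with $C$ as outer face'': weak enough to be attainable (in particular it must tolerate pieces such as the Wagner graph $V_8$, which the existing proofs of \thmref{Choose5} handle separately via Wagner's theorem) yet strong enough to force the chord-separation and the path structure of the fan around $v_p$. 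Showing that $\G$ is preserved under splitting at a cut vertex, under splitting along a chord, and under the vertex deletion, and that the local structure used in the chordless case can be extracted purely from the exclusion of a $K_5$-minor, is the technical core, and it is exactly this part that takes the place of Wagner's structure theorem. If it can be carried through with room to spare, the same scheme is the natural thing to try with $K_6$ and lists of size $6$, which is the broader point raised in the abstract.
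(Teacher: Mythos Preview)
Your proposal correctly identifies the shape of Thomassen's argument and candidly isolates the missing piece --- the definition of $\G$ --- but as written it is a statement of the problem rather than a solution. You explicitly leave $\G$ undefined and list the three properties it must secure (the ``fan path'' around $v_p$, chord-as-$2$-separation, closure under the reductions) without supplying either a candidate definition or an argument that any such $\G$ exists. So this is a plan, not a proof; the content that would distinguish it from Thomassen's original is exactly the part you have deferred.

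The paper's resolution is precisely to supply that definition, and it differs from your setup in a way that matters. First, the boundary is \emph{not} required to be a cycle: it is just a set $B\subseteq V(G)$ such that adding a single apex vertex $\alpha$ adjacent to all of $B$ keeps the graph $K_5$-minor-free. This is the minor-theoretic substitute for ``$B$ lies on the outer face,'' and closure under the reductions is then nearly automatic (contracting the edge $v\alpha$ in $\plus{G}{B}$ shows that $(B\setminus\{v\})\cup N_G(v)$ is again a boundary of $G-v$). Second, because $B$ is only a set, no fan path $v_1w_1\cdots w_k v_{p-1}$ is ever needed; the only structural fact required is that $G[B]$ has maximum degree at most~$2$, so that the vertex $v\in B\setminus A$ being deleted has at most one neighbour in $B$ besides its precoloured neighbour. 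That degree bound is extracted from a short characterisation of rooted $K_3$-minors: if some $v\in B$ had three neighbours $x,y,z$ in $B$, then either $G-v$ contains a $K_3$-minor rooted at $x,y,z$ --- which together with $v$ and $\alpha$ produces a $K_5$-minor in $\plus{G}{B}$ --- or some vertex $w$ separates $x,y,z$ in $G-v$, and $\{v,w\}$ is a small separator handled earlier in the case ladder. Your chord-separation case and your demand that the new boundary be a cycle are both subsumed by this single degree argument. In short, the gap in your proposal is real, and the paper closes it by \emph{relaxing} your cycle hypothesis to a set-with-apex hypothesis rather than by trying to force cycle-like behaviour inside an arbitrary $K_5$-minor-free graph.
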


One feature of Thomassen's proof is that it does not depend on the
degeneracy of planar graphs. Thus list colourings provide a potential
route around the first obstacle above. See
\citep{KawaMohar-JCTB07,Wood-Contractibility} for more concrete
examples of this idea. The second obstacle remains. In particular,
Thomassen's proof relies heavily on the structure of planar graphs, as
do the proofs of \thmref{Choose5}, both of which employ the structural
characterisation of $K_5$-minor graphs in terms of planar graphs due
to \citet{Wagner37}. The main contribution of this paper is to prove
\thmref{Choose5} without using Wagner's characterisation---even
without planar embeddings. Given that there is no precise structure
theorem for $K_t$-minor-free graphs for $t\geq6$, we consider this a
first step towards proving the (Weak) List Hadwiger Conjecture for
$t\geq6$.

\section{Proof of \thmref{Choose5}}

Our proof of \thmref{Choose5} is inspired by Thomassen's proof for
planar graphs. This remarkable inductive argument allows two adjacent
vertices on the outerface to be precoloured (that is, have a list of
one colour), the remaining vertices on the outerface have a list of
three colours, and the other vertices have a list of five colours. The
dependence on the outerface is an obstacle to generalising Thomassen's
proof, and motivates the following definition.

\begin{definition}
  Let \M\ be a minor-closed class of graphs. Let $B\subseteq V(G)$ for
  some graph $G\in\M$. Let $\plus{G}{B}$ be the graph obtained from
  $G$ by adding a new vertex $\alpha$ adjacent to each vertex in
  $B$. Then $B$ is an \emph{\M-boundary} of $G$ if $\plus{G}{B}$ is
  also in \M.
\end{definition}

This definition generalises the outerface, since $\plus{G}{B}$ is
planar if and only if all the vertices in $B$ are on the outerface of
some planar embedding of $G$.

\begin{lemma}
  \lemlabel{Contract} Let \M\ be a minor-closed class of graphs.  Let
  $B$ be an \M-boundary of some graph $G\in\M$.  Let $v\in B$.  Then
  $C:=(B\setminus\{v\})\cup N_G(v)$ is an $\M$-boundary of $H:=G-v$.
\end{lemma}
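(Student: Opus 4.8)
The plan is to exhibit $\plus{H}{C}$ as a minor of $\plus{G}{B}$. Since $\plus{G}{B}\in\M$ by hypothesis and $\M$ is minor-closed, this immediately yields $\plus{H}{C}\in\M$, which is precisely the assertion that $C$ is an $\M$-boundary of $H$. So the whole proof reduces to identifying the right minor operation and checking that it produces the correct graph.

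The operation I would use is a single edge contraction. Recall that $\plus{G}{B}$ has vertex set $V(G)\cup\{\alpha\}$, with $\alpha$ adjacent to every vertex of $B$; in particular $\alpha$ is adjacent to $v$, while $v$ keeps all of its $G$-neighbours $N_G(v)$. Contract the edge $\alpha v$ in $\plus{G}{B}$ and call the resulting merged vertex $\alpha$ again. Its neighbourhood is the set of vertices adjacent in $\plus{G}{B}$ to $\alpha$ or to $v$, excluding $\alpha$ and $v$ themselves; this is exactly $(B\setminus\{v\})\cup N_G(v)=C$. (Here $v\notin N_G(v)$ because $G$ is simple, and $\alpha\notin N_G(v)$ because $N_G(v)$ is taken in $G$, not in $\plus{G}{B}$.) Deleting $\alpha$ and $v$ from $\plus{G}{B}$ leaves exactly $G-v=H$, so after suppressing any parallel edges created by the contraction the resulting graph is $H$ together with one extra vertex adjacent precisely to $C$, i.e.\ $\plus{H}{C}$. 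Hence $\plus{H}{C}$ is obtained from $\plus{G}{B}$ by one edge contraction, so $\plus{H}{C}\in\M$ and $C$ is an $\M$-boundary of $H$.

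There is no real obstacle here: the argument is essentially the single line ``contract $\alpha v$''. The only points that need a line of care are that the contracted vertex has neighbourhood exactly $C$ (no vertex of $C$ is lost and none is spuriously added, including in the degenerate cases where $N_G(v)=\emptyset$ or $B=\{v\}$), and that what remains off the contracted vertex is genuinely $G-v$. Both are immediate from the definitions of $\plus{G}{B}$ and of $C$.
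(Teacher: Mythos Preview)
Your argument is correct and is exactly the paper's approach: the paper's proof is the one-line observation that $\plus{H}{C}$ is the graph obtained from $\plus{G}{B}$ by contracting the edge $v\alpha$, hence lies in $\M$. Your write-up simply unpacks this observation with more care about the neighbourhood of the contracted vertex.
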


\begin{proof}
  Observe that $\plus{H}{C}$ is isomorphic to the graph obtained from
  $\plus{G}{B}$ by contracting the edge $v\alpha$. Since
  $\plus{G}{B}\in\M$ and \M\ is minor-closed, $\plus{H}{C}$ is also in
  \M. That is, $C$ is an \M-boundary of $H$.
\end{proof}

The following lemma is a corollary of a more general result by
\citet{Mader-DM88}; we include the following simple proof for
completeness.

\begin{lemma}
  \lemlabel{Contractible} Let $v$ be a vertex in a $2$-connected graph
  $G$. Then $G/vw$ is $2$-connected for some edge $vw$ incident to
  $v$.
\end{lemma}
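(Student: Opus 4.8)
The plan is to split the argument into two parts. First, I would locate a neighbour $w$ of $v$ that is \emph{not} a cut vertex of $H:=G-v$; second, I would show that contracting \emph{any} such edge $vw$ produces a $2$-connected graph. (We may assume $G$ has at least three vertices, since the remaining cases — essentially $G=K_3$ — are immediate.)

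For the second part, suppose $w$ is a neighbour of $v$ with $G-\{v,w\}$ connected, and let $u$ denote the vertex of $G/vw$ obtained by identifying $v$ and $w$. Since $G$ is connected, so is $G/vw$, so if it failed to be $2$-connected it would have a cut vertex $x$. If $x=u$, then $G/vw-u$ is exactly $G-\{v,w\}$, which is connected — a contradiction. If $x\neq u$, then $x$ is a vertex of $G$ distinct from $v$ and $w$, and $(G/vw)-x=(G-x)/vw$; as contracting an edge cannot disconnect a connected graph, the disconnectedness of $(G-x)/vw$ forces $G-x$ to be disconnected, making $x$ a cut vertex of $G$ and contradicting $2$-connectivity. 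Hence $G/vw$ is $2$-connected, so it suffices to carry out the first part.

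For the first part, note that $H=G-v$ is connected because $G$ is $2$-connected. If $H$ has no cut vertex, then every neighbour of $v$ works. Otherwise I would pass to the block decomposition of $H$ and select an \emph{end-block} $B$, that is, a block of $H$ meeting exactly one cut vertex $c$ of $H$. The key observation is that $v$ must have a neighbour in the non-empty set $V(B)\setminus\{c\}$: every neighbour in $H$ of a vertex of $V(B)\setminus\{c\}$ lies in $B$ (otherwise such a vertex would belong to two blocks and hence be a second cut vertex of $H$ inside $B$), so if $v$ had no neighbour there, then deleting $c$ from $G$ would separate $V(B)\setminus\{c\}$ from $v$, making $c$ a cut vertex of $G$. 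Any neighbour $w\in V(B)\setminus\{c\}$ of $v$ lies in no block of $H$ other than $B$, hence is not a cut vertex of $H$, which is exactly what the first part requires. I expect this end-block step to be the only real obstacle; the rest is routine bookkeeping about how edge contraction interacts with vertex deletion and with connectivity.
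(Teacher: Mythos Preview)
Your proof is correct and takes a genuinely different route from the paper's. The paper argues by contradiction with an extremal choice: if $G/vw$ failed to be $2$-connected for every neighbour $w$ of $v$, then each $\{v,w\}$ would be a cut set; choosing $w$ so that the smallest component $H$ of $G-\{v,w\}$ has minimum order, one observes that $v$ must have a neighbour $x\in V(H)$ (otherwise $w$ would be a cut vertex of $G$), and then $G-\{v,x\}$ has a component properly contained in $H$, contradicting the minimality of $H$. Your argument is instead structural: you pass to the block tree of $G-v$, locate an end-block, and use the $2$-connectivity of $G$ to force $v$ to have a neighbour in its interior. The paper's proof is shorter and entirely self-contained; yours makes the underlying reason more transparent (the desired $w$ is precisely a non-cut-vertex of $G-v$) and in fact identifies \emph{all} admissible choices of $w$, not just one. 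One small quibble: your parenthetical about base cases is garbled --- you write ``at least three vertices'' but then name $K_3$ as a remaining case; presumably you meant at least four. (For $G=K_3$ the contraction $G/vw$ has only two vertices, so the statement is delicate under the convention that $2$-connected graphs have order at least three; the paper's own proof glosses over the same boundary case.)
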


\begin{proof}
  Suppose on the contrary that $G/vw$ is not $2$-connected for each
  edge $vw$ incident to $v$; thus $\{v,w\}$ is a cut set. Choose such
  an edge $vw$ to minimise the order of a smallest component $H$ of
  $G-\{v,w\}$. Since $G$ is $2$-connected, $v$ has a neighbour $x$ in
  $H$. Thus $G-\{v,x\}$ contains a component that is a proper subgraph
  of $H$, which contradicts the choice of $vw$.
\end{proof}

Let $x,y,z$ be distinct vertices in a graph $G$. A \emph{$K_3$-minor
  rooted} at $x,y,z$ consists of three connected subgraphs $X,Y,Z$ of
$G$ that are pairwise disjoint and pairwise adjacent, such that $x\in
V(X)$, $y\in V(Y)$ and $z\in V(Z)$.  See
\citep{Wollan-JGT08,JorgKawa-JGT07,Kawa-DM04} for more on rooted
minors. A vertex $v$ of $G$ is \emph{good} (with respect to $x,y,z$)
if at least two of $x,y,z$ are in the same component of $G-v$,
otherwise $v$ is \emph{bad}. Note that if $v$ is a vertex in a
$2$-connected graph $G$, then $G-v$ is connected, and all the vertices
in $\{x,y,z\}\setminus\{v\}$ are in the same component of $G-v$; thus
at least two of $x,y,z$ are in one component of $G-v$. That is, every
vertex is good in a $2$-connected graph.


\begin{lemma}
  \lemlabel{GeneralRooted} Let $x,y,z$ be distinct vertices in a graph
  $G$. Then $G$ has a $K_3$-minor rooted at $x,y,z$ if and only if
  every vertex in $G$ is good.
\end{lemma}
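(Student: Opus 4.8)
The plan is to prove the two implications separately. The forward implication is immediate: if $G$ has a $K_3$-minor rooted at $x,y,z$ with branch sets $X,Y,Z$ and $v\in V(G)$, then either $v$ lies in none of $X,Y,Z$, in which case $X\cup Y\cup Z$ survives in $G-v$ and is connected (the branch sets are connected and pairwise adjacent), so all of $x,y,z$ lie in one component of $G-v$; or $v$ lies in a single branch set, say $v\in X$, in which case $Y\cup Z$ survives in $G-v$ and is connected, so $y$ and $z$ lie in one component of $G-v$. In either case $v$ is good.

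For the converse I would induct on $|V(G)|$, proving: if every vertex of $G$ is good with respect to $x,y,z$ then $G$ has a $K_3$-minor rooted at $x,y,z$. The base case $|V(G)|=3$ is immediate, since then $V(G)=\{x,y,z\}$ and goodness of each vertex forces the other two to be adjacent, so $G$ contains a triangle on $x,y,z$. For $|V(G)|\geq4$ I would split into cases according to the connectivity of $G$. If $G$ is $2$-connected, choose a vertex $v\notin\{x,y,z\}$ (which exists as $|V(G)|\geq4$); by \lemref{Contractible} some edge $vw$ has $G/vw$ still $2$-connected, and $x,y,z$ remain three distinct vertices of $G/vw$. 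Since every vertex of a $2$-connected graph is good, the induction hypothesis gives a rooted $K_3$-minor of $G/vw$, and uncontracting $vw$---replacing, if necessary, the branch set containing the contracted vertex by its preimage, which stays connected because $vw$ is an edge---produces one in $G$. Thus the $2$-connected case is essentially free given \lemref{Contractible}.

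It remains to treat $G$ not $2$-connected. If $G$ is disconnected, then goodness forces $x,y,z$ to lie in a single component $G_0$ (otherwise a short check on which components contain the three roots exhibits a bad vertex among $x,y,z$); every vertex of $G_0$ is still good, so the induction hypothesis applies to $G_0$, which has fewer vertices than $G$. If $G$ is connected with a cutvertex $c$, let $G_1,\dots,G_k$ be the subgraphs induced by $c$ together with a component of $G-c$. When $x,y,z$ all lie in one $G_i$, apply the induction hypothesis to $G_i$ with roots $x,y,z$. Otherwise, using that $c$ is good one checks that $c\notin\{x,y,z\}$ and, after relabelling, that $x,y$ lie in one side $G_i$ while $z$ lies in another side $G_j$; then every vertex of $G_i$ is good with respect to $x,y,c$, so the induction hypothesis gives a $K_3$-minor of $G_i$ rooted at $x,y,c$, and extending the branch set containing $c$ along a path from $c$ to $z$ inside $G_j$ (which is disjoint from the other two branch sets) yields a $K_3$-minor of $G$ rooted at $x,y,z$. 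I expect the bookkeeping in these last two cases to be the main obstacle: one must verify that goodness of every vertex of $G$ descends to goodness of every vertex of the smaller graph under the (possibly re-chosen) roots. This rests on two elementary facts about a cutvertex $c$---that any path between two vertices on the same side stays on that side, and any path joining the two sides passes through $c$---and the only genuinely delicate point is correctly identifying which root to replace by $c$.
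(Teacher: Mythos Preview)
Your proof is correct and follows the same inductive scheme as the paper: identical forward direction, base case, disconnected case, and $2$-connected case (via \lemref{Contractible}). The one point of divergence is the cutvertex case. Where you pass to the piece $G_i$ containing two of the roots, re-root at $x,y,c$, and then extend the branch set through $c$ along a $cz$-path in $G_j$, the paper instead contracts a single edge $cw$ with $w$ a neighbour of $c$ in a component of $G-c$ not containing $x$ and $y$, keeping the roots $x,y,z$ fixed throughout (with $z$ possibly becoming the contracted vertex). Both arguments are valid; the paper's contraction sidesteps the re-rooting and the verification that goodness descends to $G_i$, while your version makes the eventual rooted minor more explicit.
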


\begin{proof}
  $(\Longrightarrow)$ Let $X,Y,Z$ be the branch sets of a $K_3$-minor
  rooted at $x,y,z$. Let $v\in V(G)$. Without loss of generality,
  $v\not\in X\cup Y$. Since $G[X\cup Y]$ is connected, $x$ and $y$ are
  in the same component of $G-v$. Thus $v$ is good.

  $(\Longleftarrow)$ We proceed by induction. Let $x,y,z$ be distinct
  vertices in a graph $G$ in which every vertex is good. If $|V(G)|=3$
  and $G\not\cong K_3$, then without loss of generality, $G$ is a
  subgraph of the path $(x,y,z)$, implying $y$ is bad. Thus, if
  $|V(G)|=3$ then $G\cong K_3$, and we are done. Now assume that
  $|V(G)|\geq4$.

  First suppose that $G$ is disconnected. If $x$, $y$ and $z$ are all
  in the same component $H$ of $G$, then by induction, $H$ and hence
  $G$ has a $K_3$-minor rooted at $x,y,z$. Otherwise some component
  contains at most one of $x,y,z$, say $x$. Then $y$ and $z$ are both
  bad.

  Now assume that $G$ is connected. Suppose that $G$ contains a
  cut-vertex $v$. Since $v$ is good, at least two of $x,y,z$, say $x$
  and $y$, are in the same component of $G-v$. Let $w$ be a neighbour
  of $v$ in a component of $G-v$ not containing $x$ and $y$.  Let $G'$
  be the graph obtained from $G$ by contracting $vw$ into a vertex
  $v'$. We may consider $x,y,z$ to be vertices of $G'$. (It is
  possible that $w=z$ and $v'=z$.)\ In $G'$, the vertex $v'$ is good
  since $x$ and $y$ remain in the same component of $G'-v'$. If some
  other vertex in $G'$ is bad, then it would be bad in $G$. Thus every
  vertex in $G'$ is good. By induction, $G'$ and hence $G$ contains a
  $K_3$-minor rooted at $x,y,z$.

  Now assume that $G$ is $2$-connected. Choose $v\in
  V(G)\setminus\{x,y,z\}$. By \lemref{Contractible}, $G/vw$ is
  $2$-connected for some edge $vw$ incident to $v$. Thus every vertex
  is good in $G/vw$.  Since $x,y,z$ are distinct vertices in $G/vw$,
  by induction, $G/vw$ and hence $G$ has a $K_3$-minor rooted at
  $x,y,z$.
\end{proof}

A graph $G$ is said to contain \emph{every rooted $K_3$-minor} if $G$
contains a $K_3$-minor rooted at $x,y,z$ for all distinct $x,y,z\in
V(G)$.


 \begin{proposition}
   \proplabel{Rooted} A graph $G$ contains every rooted $K_3$-minor if
   and only if $G$ is $2$-connected.
 \end{proposition}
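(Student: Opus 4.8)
The plan is to deduce the proposition from \lemref{GeneralRooted}, which already translates ``$G$ has a $K_3$-minor rooted at $x,y,z$'' into ``every vertex of $G$ is good with respect to $x,y,z$''. Hence $G$ contains every rooted $K_3$-minor precisely when, for \emph{every} triple of distinct vertices $x,y,z$, every vertex of $G$ is good; it remains to show this is equivalent to $2$-connectivity. Throughout I assume $|V(G)|\geq 3$, which is needed for a triple of distinct vertices to exist.

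For the ``if'' direction I would just quote the observation recorded immediately before \lemref{GeneralRooted}: if $G$ is $2$-connected and $v\in V(G)$, then $G-v$ is connected, so the (at least two) vertices of $\{x,y,z\}\setminus\{v\}$ lie in a common component of $G-v$, and therefore $v$ is good. Since this holds for every vertex $v$ and every triple, \lemref{GeneralRooted} yields a $K_3$-minor rooted at each triple, i.e.\ $G$ contains every rooted $K_3$-minor.

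For the ``only if'' direction I would prove the contrapositive: if $G$ is not $2$-connected, then I exhibit distinct $x,y,z$ together with a bad vertex, so that \lemref{GeneralRooted} forbids a $K_3$-minor rooted at $x,y,z$. If $G$ is disconnected, choose $x$ and $y$ in distinct components and let $z$ be any third vertex; deleting $z$ cannot merge components, so $x$ and $y$ remain separated in $G-z$ and no two of $x,y,z$ share a component of $G-z$, making $z$ bad. If instead $G$ is connected with a cut vertex $v$, choose $x$ and $z$ in different components of $G-v$ and set $y:=v$; then $x,y,z$ are distinct, and in $G-y=G-v$ the vertices $x$ and $z$ lie in different components while $y$ is absent, so $v$ is bad. (One could equally bypass \lemref{GeneralRooted} in this last case: any connected subgraph containing $x$ but not $v$ is trapped inside $x$'s component of $G-v$, and similarly for $z$, so the two branch sets cannot be adjacent.)

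Since \lemref{GeneralRooted} supplies the substantive content, this is a short argument; the only mildly delicate point is the choice of roots in the cut-vertex case---taking the cut vertex itself as one of the three roots---and checking that the three selected vertices are genuinely distinct. I do not anticipate a real obstacle.
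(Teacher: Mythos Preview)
Your proof is correct and essentially identical to the paper's: both derive the forward direction from \lemref{GeneralRooted} via the observation that every vertex in a $2$-connected graph is good, and both handle the converse by exhibiting a triple admitting no rooted $K_3$-minor when $G$ is disconnected or has a cut vertex. The only cosmetic differences are that the paper argues the converse directly (without passing through ``bad vertices'' and \lemref{GeneralRooted}) and labels the cut vertex itself as $x$ rather than $y$.
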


\begin{proof}
  Since every vertex is good in a $2$-connected graph, by
  \lemref{GeneralRooted}, a $2$-connected graph contains every rooted
  $K_3$-minor. For the converse, let $G$ be a graph that contains
  every rooted $K_3$-minor. If $G$ is disconnected, then there is no
  $K_3$-minor rooted at $x,y,z$, whenever $x$ and $y$ are in distinct
  components. Hence $G$ is connected. If $G$ has a cut-vertex $x$,
  then $G$ contains no $K_3$-minor rooted at $x,y,z$, whenever $y$ and
  $z$ are in distinct components of $G-x$. Hence $G$ is $2$-connected.
\end{proof}



Let $G_1$ and $G_2$ be subgraphs of a graph $G$, such that $G=G_1\cup
G_2$ and $V(G_1)\setminus V(G_2)\neq\emptyset$ and $V(G_2)\setminus
V(G_1)\neq\emptyset$.  In particular, there is no edge between
$V(G_1)\setminus V(G_2)$ and $V(G_2)\setminus V(G_1)$.  Then
$\{G_1,G_2\}$ is a \emph{separation} of order $|V(G_1)\cap V(G_2)|$.

\thmref{Choose5} is a consequence of the following lemma (with
$A=B=\emptyset$).

\begin{lemma}
  Let $\M$ be the class of $K_5$-minor-free graphs. Let $G\in\M$.  Let
  $A\subseteq B\subseteq V(G)$, such that $A$ is a clique, and $B$ is
  an $\M$-boundary of $G$. Let $L$ be a list-assignment of $G$ such
  that:
  \begin{itemize}
  \item $|L(x)|=1$ for each vertex $x\in A$,
  \item $L(x)\neq L(y)$ for distinct $x,y\in A$,
  \item $|L(x)|\geq 3$ for each vertex $x\in B\setminus A$,
  \item $|L(x)|\geq 5$ for each vertex $x\in V(G)\setminus B$.
  \end{itemize}
  Then $G$ is $L$-colourable.
\end{lemma}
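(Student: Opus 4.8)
The plan is to mimic Thomassen's induction, but using the notion of an $\M$-boundary in place of ``vertices on the outerface'' and using \propref{Rooted} (rooted $K_3$-minors $\equiv$ $2$-connectivity) in place of planar-embedding arguments. We induct on $|V(G)|$. The base case $|V(G)|\le 1$ is trivial. For the inductive step, I would first dispose of the cases where the structure is degenerate, and then identify a single vertex $v$ on the boundary to remove, applying \lemref{Contract} to conclude that $C:=(B\setminus\{v\})\cup N_G(v)$ is an $\M$-boundary of $G-v$; the colouring of $G-v$ is then extended to $v$ by a counting argument.

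\textbf{Reductions.} If $G$ is disconnected, colour each component separately (splitting $A$ and $B$ accordingly; note a clique lies in one component). If $G$ has a separation $\{G_1,G_2\}$ of order at most $1$, colour $G_1$ first, then colour $G_2$ with the cut vertex (if any) precoloured --- here one must check that adding the cut vertex to $A$ keeps $A$ a clique and that $B\cap V(G_i)$ is still an $\M$-boundary of $G_i$ (it is, since $G_i$ is a subgraph of $G$ with the extra vertex $\alpha$ still attachable). Similarly, if there is a vertex $x\in V(G)\setminus B$ of degree at most $4$, or a vertex $x\in B\setminus A$ of degree at most $2$ (counting a neighbour $\alpha$ would make it degree $\le 3$ in $\plus{G}{B}$), remove $x$, colour $G-x$ by induction (with $x$'s neighbours possibly joining $B$ via \lemref{Contract}), and greedily colour $x$. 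After these reductions we may assume $G$ is $2$-connected, every vertex outside $B$ has degree $\ge 5$, every vertex of $B\setminus A$ has degree $\ge 3$, and (after a little more care) $A\ne\emptyset$; WLOG $|A|\le 2$, and the truly substantive case is $|A|=2$, say $A=\{a_1,a_2\}$ with $a_1a_2\in E(G)$.

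\textbf{Main step.} Now I want to choose a vertex $v\in B\setminus A$ adjacent to $a_1$ (or to $a_2$), remove it, and set up the induction on $H:=G-v$ with $C:=(B\setminus\{v\})\cup N_G(v)$. Before colouring $H$, reserve for $v$ a colour $c\in L(v)$ with $c\notin L(a_1)\cup L(a_2)$ (possible since $|L(v)|\ge 3$ and $|L(a_1)\cup L(a_2)|\le 2$), delete $c$ from the lists of all neighbours of $v$ in $B\setminus A$ (so those lists shrink from $\ge 3$ to $\ge 2$) --- this is exactly the step where Thomassen uses that such neighbours form a path on the outerface so that precolouring along them propagates; here the rooted-$K_3$-minor machinery is the replacement, ensuring that after contraction the relevant set still behaves like a boundary and the new precoloured clique $A'$ (containing $v$'s former neighbours that were in $A$, now possibly enlarged) remains a clique in $H^{C+\alpha}$. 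Apply induction to $H$ with list-assignment $L'$, then colour $v$ with $c$.

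\textbf{Main obstacle.} The crux --- and where I expect to spend almost all the effort --- is verifying that after removing $v$ and contracting, the set $C$ really is an $\M$-boundary \emph{and} that the precoloured vertices still form a clique with distinct lists, i.e.\ that the hypotheses of the lemma are genuinely restored for $(H, A', C, L')$. In Thomassen's planar proof this is visual: the new outerface is read off the embedding. Without Wagner's theorem or an embedding, I must instead argue purely via minors: \lemref{Contract} hands me the $\M$-boundary, but I still need to know that the ``chord'' structure that lets Thomassen keep two adjacent precoloured vertices corresponds here to the existence of a suitable rooted $K_3$-minor, which is where \propref{Rooted} and \lemref{GeneralRooted} must be invoked to locate the right vertex $v$ and to guarantee that $K_5$-minor-freeness forbids the configurations that would break the induction (e.g.\ too many common neighbours of $a_1,a_2$, or $\plus{H}{C}$ acquiring a $K_5$-minor). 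Making this case analysis go through using only $2$-connectivity, rooted $K_3$-minors, and the minor-closed boundary operation --- rather than faces --- is the whole point of the paper and the step I would flag as hard.
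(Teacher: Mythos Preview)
Your outline has the right shape but misses the two mechanisms that make the induction close.

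\textbf{The role of rooted $K_3$-minors.} You invoke \lemref{GeneralRooted} and \propref{Rooted} vaguely, ``to locate the right vertex $v$''. In the paper they do something quite different and much more specific: they are used to bound the maximum degree of $G[B]$ by~$2$. Concretely, if some $v\in B$ had three neighbours $x,y,z$ in $B$, then a $K_3$-minor in $G-v$ rooted at $x,y,z$ together with $v$ and $\alpha$ would give a $K_5$-minor in $\plus{G}{B}$; so by \lemref{GeneralRooted} there is a bad vertex $w$, and $\{v,w\}$ is a cut separating two vertices of $B$. That $2$-cut (with one vertex in $B$) is then handled by a separate separation case --- which you do not have, since you only treat separations of order $\le 1$. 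The $2$-cut case is itself nontrivial: one must argue that the edge $vw$ can be added without creating a $K_5$-minor in $\plus{G}{B}$, and that after splitting, $w$ can be pulled into the boundary of the second side by contracting the first side minus $v$ onto it. None of this is visible in your sketch.

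\textbf{The list bookkeeping at the removed vertex.} Your Main step reserves \emph{one} colour $c$ for $v$ and deletes $c$ from the lists of neighbours of $v$ in $B\setminus A$, acknowledging those lists drop to size $\ge 2$ --- but the induction hypothesis requires size $\ge 3$ there, so this simply does not recurse. The paper's manoeuvre is different: having first reduced to $\deg_{G[B]}(v)\le 2$, the vertex $v\in B\setminus A$ adjacent to $p\in A$ has at most one other neighbour $w$ in $B$. One reserves \emph{two} colours $c,d\in L(v)\setminus L(p)$, deletes both from the lists of neighbours of $v$ \emph{outside} $B$ (so those go from $\ge 5$ to $\ge 3$, which is exactly what the new boundary needs), recurses, and then colours $v$ with whichever of $c,d$ avoids the colour of $w$. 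Your low-degree-vertex reductions are unnecessary and do not substitute for either of these steps.
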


\begin{proof} 
  Let $(G,A,B,L)$ be a counterexample with $|V(G)|$ minimum, and then
  with $|A|$ maximum. Clearly $|V(G)|\geq4$.

  \textbf{\boldmath Case 1. $B=\emptyset$:} Choose $v\in V(G)$. Then
  $\plus{G}{\{v\}}$, which is obtained from $G$ by adding a new vertex
  $\alpha$ adjacent to $v$, is $K_5$-minor-free. Let $L'(v):=\{c\}$
  for some colour $c\in L(v)$. Let $L'(x):=L(x)$ for every other
  vertex $x$. By the choice of $(G,A,B,L)$, the instance
  $(G,\{v\},\{v\},L')$ is not a counterexample, and $G$ is
  $L$-colourable. Now assume that $B\neq\emptyset$.

  \textbf{\boldmath Case 2. $A=\emptyset$:} Choose $v\in B$. Let
  $L'(v):=\{c\}$ for some colour $c\in L(v)$. Let $L'(x):=L(x)$ for
  every other vertex $x$. Again $(G,\{v\},B,L')$ is not a
  counterexample, and $G$ is $L$-colourable. Now assume that
  $A\neq\emptyset$.

  \textbf{\boldmath Case 3. $G$ is not connected:} Then $G$ contains a
  separation $\{G_1,G_2\}$ with $V(G_1\cap G_2)=\emptyset$.  Since $A$
  is a clique, without loss of generality, $A\subseteq V(G_1)$. Let
  $B_i:=B\cap V(G_i)$. Then $B_i$ is an $\M$-boundary of $G_i$ (since
  $\plus{G_i}{B_i}\subseteq\plus{G}{B}$).  Define $L_i(x):=L(x)$ for
  each vertex $x$ in $G_i$. Hence $(G_1,A,B_1,L_1)$ is not a
  counterexample, and $G_1$ is $L_1$-colourable. Also
  $(G_2,\emptyset,B_2,L_2)$ is not a counterexample, and $G_2$ is
  $L_2$-colourable. Hence $G$ is $L$-colourable. Now assume that $G$
  is connected.

  \textbf{\boldmath Case 4. $G$ contains a cut-vertex $v\in B$:} Then
  $G$ contains a separation $\{G_1,G_2\}$ with $V(G_1\cap G_2)=\{v\}$.
  Since $A$ is a clique, without loss of generality, $A\subseteq
  V(G_1)$. Let $B_i:=B\cap V(G_i)$. Then $B_i$ is an $\M$-boundary of
  $G_i$. Define $L_1(x):=L(x)$ for each vertex $x$ in $G_1$. Thus
  $(G_1,A,B_1,L_1)$ is not a counterexample, and $G_1$ is
  $L_1$-colourable. Let $L_2(v):=\{c\}$ where $v$ is coloured $c$ in
  $G_1$.  Let $L_2(x):=L(x)$ for every other vertex $x$ in $G_2$. Then
  $(G_2,\{v\},B_2,L_2)$ is not a counterexample (since $v\in B_2$),
  and $G_2$ is $L_2$-colourable. Hence $G$ is $L$-colourable (since
  $v$ receives the same colour in $G_1$ and in $G_2$, and each edge of
  $G$ is in $G_1$ or $G_2$). Now assume that $G-v$ is connected for
  every vertex $v\in B$.

  \textbf{\boldmath Case 5. $G$ contains a cut-vertex $v$ separating
    two vertices in $B$:} Then $G$ contains a separation $\{G_1,G_2\}$
  with $V(G_1\cap G_2)=\{v\}$, such that $B\cap V(G_1-v)\neq\emptyset$
  and $B\cap V(G_2-v)\neq\emptyset$.  Since $A$ is a clique, without
  loss of generality, $A\subseteq V(G_1)$. Let $B_1:=B\cap
  V(G_1)$. Then $B_1$ is an $\M$-boundary of $G_1$. Define
  $L_1(x):=L(x)$ for each vertex $x$ in $G_1$. Thus $(G_1,A,B_1,L_1)$
  is not a counterexample, and $G_1$ is $L_1$-colourable.

  Since $G$ is connected, $G_1$ is connected. Let $B_2:=(B\cap
  V(G_2))\cup\{v\}$. Then $B_2$ is an \M-boundary of $G_2$, since
  \plus{G_2}{B_2} is a minor of \plus{G}{B} obtained by contracting
  $G_1$ into $v$ (since $\alpha$ has a neighbour in $G_1-v$).  Let
  $L_2(v):=\{c\}$ where $v$ is coloured $c$ in $G_1$.  Let
  $L_2(x):=L(x)$ for every other vertex $x$ in $G_2$. Then
  $(G_2,\{v\},B_2,L_2)$ is not a counterexample (since $v\in B_2$),
  and $G_2$ is $L_2$-colourable. Hence $G$ is $L$-colourable. Now
  assume that $G$ contains no cut-vertex separating two vertices in
  $B$.

  \textbf{\boldmath Case 6. $G$ contains a cut-set $\{v,w\}$
    separating two vertices in $B$, where $v\in B$:} Thus $G$ has a
  separation $\{G_1,G_2\}$ with $V(G_1)\cap V(G_2)=\{v,w\}$ and $B\cap
  V(G_1-\{v,w\})\neq\emptyset$ and $B\cap
  V(G_2-\{v,w\})\neq\emptyset$.

  Suppose that $vw\not\in E(G)$. We claim that adding the edge $vw$
  creates no $K_5$-minor in $\plus{G}{B}$. Let $G'$ be the graph
  obtained from $\plus{G}{B}$ by adding the edge $vw$. Let $H$ be a
  4-connected minor in $G'$. Since $\{\alpha,v,w\}$ is a separator in
  $G'$, there are no two branch sets of $H$, with one contained in
  $V(G_1)\setminus V(G_2)$, and the other contained in
  $V(G_2)\setminus V(G_1)$. Thus, without loss of generality, every
  branch set of $H$ intersects $G_1$. By Cases 4 and 5, neither $v$
  nor $w$ are cut-vertices in $G$. Thus there is $vw$-path $P$ in
  $G_2$. Hence the edge $vw$ in our $H$-minor can be replaced by $P$
  to obtain an $H$-minor in $\plus{G}{B}$ (without $vw$). Since $K_5$
  is 4-connected and $\plus{G}{B}$ is $K_5$-minor-free, $G'$ is also
  $K_5$-minor-free. That is, adding $vw$ does not create a $K_5$-minor
  in $\plus{G}{B}$ (and also not in $G$). Since $A$ is a clique,
  adding $vw$ does not break any of the assumptions in the lemma. Now
  assume that $vw\in E(G)$.

  Since $A$ is a clique, without loss of generality, $A\subseteq
  V(G_1)$. Let $B_1:=B\cap V(G_1)$. Then $B_1$ is an $\M$-boundary of
  $G_1$. Define $L_1(x):=L(x)$ for each vertex $x$ in $G_1$. Thus
  $(G_1,A,B_1,L_1)$ is not a counterexample, and $G_1$ is
  $L_1$-colourable.

  Since $G$ is connected and $v$ is not a cut-vertex, $G_1-v$ is
  connected. Let $B_2:=(B\cap V(G_2))\cup\{w\}$. Then $B_2$ is an
  \M-boundary of $G_2$, since \plus{G_2}{B_2} is a minor of
  \plus{G}{B} obtained by contracting $G_1-v$ into $w$ (since $\alpha$
  has a neighbour in $G_1-v$). Let $L_2(v):=\{c\}$ where $v$ is
  coloured $c$ in $G_1$.  Let $L_2(w):=\{d\}$ where $w$ is coloured
  $d$ in $G_1$.  Let $L_2(x):=L(x)$ for every other vertex $x$ in
  $G_2$.  Then $(G_2,\{v,w\},B_2,L_2)$ is not a counterexample (since
  $\{v,w\}\subseteq B_2$), and $G_2$ is $L_2$-colourable. Hence $G$ is
  $L$-colourable. Now assume that $G$ contains no such cut-set
  $\{v,w\}$.

  \textbf{\boldmath Case 7. Some vertex $v\in B$ has degree at least
    $3$ in $G[B]$:} Let $x,y,z$ be three neighbours of $v$ in $B$. If
  $G-v$ contains a $K_3$-minor rooted at $x,y,z$, then adding $v$ and
  $\alpha$ gives a $K_5$-minor in $\plus{G}{B}$, which contradicts the
  assumption that $B$ is an $\M$-boundary of $G$. Thus $G-v$ contains
  no $K_3$-minor rooted at $x,y,z$. By \lemref{GeneralRooted}, for
  some vertex $w$ in $G-v$, all the vertices in
  $\{x,y,z\}\setminus\{w\}$ are in distinct components of
  $G-\{v,w\}$. Thus $\{v,w\}$ is a cut-set satisfying Case 6 or 7.
  Now assume that $G[B]$ has maximum degree at most $2$.

  \textbf{\boldmath Case 8. $G[A]$ is a component of $G[B]$:} Choose
  $v\in A$. Let $G':=G-v$ and $A':=A\setminus\{v\}$ and
  $B':=(B\setminus\{v\})\cup N_G(v)$. By \lemref{Contract}, $B'$ is an
  $\M$-boundary of $G'$. Let $L'(u):=L(u)\setminus L(v)$ for each
  vertex $u\in N_G(v)\setminus B$.  Since $|L(v)|=1$ and $v$ has no
  neighbour in $B\setminus A$, we have $|L'(x)|\geq3$ for each $x\in
  B'$.  Let $L'(x):=L(x)$ for every other vertex $x$.  Then
  $(G',A',B',L')$ is not a counterexample, and $G'$ is
  $L'$-colourable. Assign $v$ the colour in $L(v)$. This colour is not
  in $L'(u)$ for each $u\in N_G(v)$. Thus $G$ is $L$-colourable.

  \textbf{\boldmath Case 9. $G[A]$ is not a component of $G[B]$:}
  Choose $v\in B\setminus A$ adjacent to some vertex $p\in A$. Since
  $G[B]$ has maximum degree at most 2, $v$ has at most one other
  neighbour in $B$; let $w$ be this neighbour (if it exists). Let
  $G':=G-v$ and $B':=(B\setminus\{v\})\cup N_G(v)$. By
  \lemref{Contract}, $B'$ is an $\M$-boundary of $G'$. Let $c,d$ be
  distinct colours in $L(v)\setminus L(p)$. Let
  $L'(u):=L(u)\setminus\{c,d\}$ for each vertex $u\in N_G(v)\setminus
  B$; thus $|L'(u)|\geq 5-2=3$. Let $L'(x):=L(x)$ for every other
  vertex $x$. Then $(G',A,B',L')$ is not a counterexample, and $G'$ is
  $L'$-colourable. Assign $v$ colour $c$ or $d$ different from the
  colour assigned to $w$ (if $w$ exists). Hence $G$ is $L$-colourable.
\end{proof}

\def\cprime{$'$}
\def\soft#1{\leavevmode\setbox0=\hbox{h}\dimen7=\ht0\advance \dimen7
  by-1ex\relax\if t#1\relax\rlap{\raise.6\dimen7
    \hbox{\kern.3ex\char'47}}#1\relax\else\if T#1\relax
  \rlap{\raise.5\dimen7\hbox{\kern1.3ex\char'47}}#1\relax \else\if
  d#1\relax\rlap{\raise.5\dimen7\hbox{\kern.9ex
      \char'47}}#1\relax\else\if D#1\relax\rlap{\raise.5\dimen7
    \hbox{\kern1.4ex\char'47}}#1\relax\else\if l#1\relax
  \rlap{\raise.5\dimen7\hbox{\kern.4ex\char'47}}#1\relax \else\if
  L#1\relax\rlap{\raise.5\dimen7\hbox{\kern.7ex
      \char'47}}#1\relax\else\message{accent \string\soft \space #1
    not defined!}#1\relax\fi\fi\fi\fi\fi\fi}
\def\Dbar{\leavevmode\lower.6ex\hbox to
  0pt{\hskip-.23ex\accent"16\hss}D}

\end{document}